\newcommand{\Z}{{\mathbb Z}}
\newcommand{\R}{{\mathbb R}}
\newcommand{\EE}{{\mathbb E}}
\newtheorem{lemma}{Lemma}[section]
\newtheorem{theorem}[lemma]{Theorem}
\newtheorem{corollary}[lemma]{Corollary}
\newtheorem{definition}[lemma]{Definition}
\newcommand{\nn}{\nonumber}
\newcommand{\be}{\begin{equation}}
\newcommand{\ee}{\end{equation}}
\newcommand{\ol}{\overline}
\newcommand{\spr}[2]{\langle #1 , #2 \rangle}
\newcommand{\E}{\mathrm{e}}
\newcommand{\I}{\mathrm{i}}
\DeclareMathOperator{\dist}{dist}
\numberwithin{equation}{section}
\begin{document}

\title{Exponential dynamical localization for the almost Mathieu operator}

\author[S.\ Jitomirskaya]{Svetlana Jitomirskaya}
\address{University of California, Irvine, CA 92717}
\email{\href{szhitomi@math.uci.edu}{szhitomi@math.uci.edu}}

\author[H.\ Kr\"uger]{Helge Kr\"uger}
\address{Mathematics 253-37, Caltech, Pasadena, CA 91125}
\email{\href{helge@caltech.edu}{helge@caltech.edu}}
\urladdr{\href{http://www.its.caltech.edu/~helge/}{http://www.its.caltech.edu/~helge/}}

\thanks{H.\ K.\ was supported by a fellowship of the Simons foundation.}

\date{\today}

\keywords{Almost Mathieu operator, dynamical localization}
\subjclass[2000]{Primary 81Q10; Secondary 37D25, 47B36}

\begin{abstract}
 We prove that the exponential moments of the position
 operator stay bounded for the supercritical almost Mathieu operator
 with Diophantine frequency.
\end{abstract}

\maketitle

\section{Introduction}

As pointed out in \cite{rjls95}, the spectral property of Anderson
localization of a self-adjoint operator $H$
does not let us conclude much about the behavior of the
time evolution $\E^{-\I t H}$ \footnote{If $\psi(t)$ solves $\I\dot\psi = H \psi$,
then $\psi(t) = \E^{-\I t H}\psi(0)$.}, with dynamical  localization-type
conclusions requiring either additional or separate
arguments. Various dynamical localization formulations have been
suggested and proved, by different methods, for most popular models,
particularly, for the Anderson model. Aside
from uniform dynamical localization that happens rarely for ergodic families
\cite{rjls95,j96}, the strongest dynamical localization property is
the exponential (in space) rate of decay of the expected overlap,
that is 
\be \label{edl}
   \EE\sup_{t\in\R} |\spr{\delta_k}{\E^{- \I t H} \delta_{\ell}}|
    \leq C \E^{-\gamma |k - \ell|}
  \ee
A stronger, but equivalent in all known examples, formulation is
(\ref{edl}) with $\E^{-\I t H}$ replaced with an arbitrary bounded
  function $f(H),$ or, equivalently, by the exponential decay of
$\EE\sum_s ( |\varphi_s(0)||\varphi_s (k)|),$ where $\{\varphi_s\}_s$
is a complete set of orthonormalized eigenfunctions (and the sum may
be localized in energy, if needed). As pointed out in \cite{ag}, this
leads to various interesting physical conclusions, for example the
exponential decay of the two-point function at the ground state and
positive temperatures with correlation length staying uniformly
bounded as temperature goes to zero. Moreover, it is this conclusion
that is often implicitly assumed as manifesting localization, in
physics literature. It is therefore desirable to establish it for
physically relevant models.

It is natural in this context to define the {\it exponential decay rate in expectation} (obviously
connected to the minimal inverse correlation length) as

\be\label{gamma}
 \gamma := \liminf_{k\to\infty}\left( -\frac{\ln \EE (\sum_{s} |\varphi_s(0)|\cdot |\varphi_s (k)|)}{|k|}\right)
\ee
with a relevant question being whether or not it is positive. This
definition can be localized  to an energy range by summing over the
eigenfunctions with energies falling in the range, in which case it is
linked to the minimal inverse correlation length for Fermi energies
falling in that range.

The corresponding question for the {\em Anderson model},
i.e. for the potential being independent identically
distributed random variables, was solved in \cite{dks83, ks80}
for the one-dimensional case and in \cite{a94, asfh01}
for higher dimensions throughout the regimes where corresponding
proofs of localization work, thus establishing positivity of $\gamma.$. The corresponding
result for continuum operators was proven in \cite{aenss06}. 

In this paper we establish the first result of this type for a
non-random ergodic operator.\footnote{ aside from families with
  uniform localization, where this conclusion holds trivially. It
  should be noted that the existing examples with uniform localization 
  (e.g. \cite{dg}) are of a rather artificial nature and do not
correspond to physical systems}
The most natural and popular operator
that fits this description is 
the almost Mathieu operator given by
\be\label{am}\begin{split}
 H_{\lambda,\alpha,\theta} : \ell^2(\Z) &\to\ell^2(\Z),\\
 H_{\lambda,\alpha,\theta} u(n) &= u(n+1) + u(n-1) + 2 \lambda \cos(2\pi(n\alpha + \theta)) u(n),
\end{split}\ee
where $\lambda > 0$, $\alpha$ is irrational, $\theta \in \R$,
and $\ell^2(\Z)$ denotes the space of square summable bi-infinite
sequences. We will be interested in the supercritical regime
that is $\lambda > 1,$ which is the regime of localization for
almost every value of $(\alpha,\theta)$. 

The frequency $\alpha$ is Diophantine
if there exist $\kappa, \tau > 0$ such that
\be
 \|q\alpha\| = \dist(q\alpha, \Z) \geq \frac{\kappa}{q^{\tau}}
\ee
for all integers $q \geq 1$. We will also use the notation
$0\leq \beta(\alpha) := \limsup -\frac{\ln  \|q\alpha\|}{q},$ so
$\beta(\alpha)=0$ for Diophantine $\alpha.$ Frequencies $\alpha$ with
small $\beta(\alpha)$ (with the quantifier depending on the context)
are often called weakly Liouville.  It was shown in \cite{j99} that
for $\lambda > 1$ and $\alpha$ Diophantine {\em Anderson localization}
holds, that is for almost every $\theta \in \R$, the operator
$H_{\lambda,\alpha,\theta}$ has pure point spectrum with 
exponentially decaying eigenfunctions. This has been extended to the 
weakly Liouville case in \cite{10M}.

As far as quantum dynamics goes,  the following
dynamical statements are known:
\begin{enumerate}
 \item Jitomirskaya and Last have shown in \cite{jl98} that for $\lambda > \frac{15}{2}$,
  $\alpha$ Diophantine, and almost every $\theta$ we have
  \be
   \sup_{t\in\R} |\spr{\delta_k}{\E^{-\I t H_{\lambda,\alpha,\theta}} \delta_{\ell}}|
    \leq C \E^{-\gamma |k - \ell|}
  \ee
  where $\gamma > 0$ is independent of $\theta$, $k$, and $\ell$,
  but $C > 0$ is allowed to depend on $\theta$ and $\ell$, but
  is independent of $k$. 
 \item Germinet and Jitomirskaya have shown in \cite{gj01} that
  for $\lambda > 1$ and $\alpha$ Diophantine, we have that for
  every $q > 1$
  \be
   \int_{0}^{1} \sup_{t} \sum_{n\in\Z} (1 + |n|)^{q}
   |\spr{\delta_n}{\E^{-\I t H_{\lambda,\alpha,\theta}} \delta_0}| d\theta
    <\infty.
  \ee
  This property is called {\em strong dynamical localization} in
  \cite{gj01} and a number of publications on the Anderson model and
  quasiperiodic operators.
  As the conclusion of Theorem~\ref{thm:main} is stronger a better name
  might be {\em polynomial dynamical localization in expectation}.
 \item It is known since \cite{g,as} that a Diophantine-type assumption
   is necessary even for spectral localization.
 \item Bourgain and Jitomirskaya \cite{bj00} have proven
  dynamical localization results of the type (ii) for more general quasi-periodic
  models.
\end{enumerate}

In this note we prove the above described exponential rate of decay for
supercritical  almost Mathieu operator with Diophantine (and even
weakly Liouville) frequencies:

\begin{theorem}\label{thm:main}
 Let $\lambda > 1.$ Then there exists $\beta >0$ such that for $\alpha$
 with $\beta(\alpha)<\beta$ we have $\gamma>0$ with $\gamma$ defined
 in (\ref{gamma}). 
\end{theorem}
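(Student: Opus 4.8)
The plan is to prove positivity of $\gamma$ defined in (\ref{gamma}) by establishing, with good uniformity in the eigenfunction label $s$, an exponential decay estimate for the product $|\varphi_s(0)|\cdot|\varphi_s(k)|$ and then integrating (summing) this over the spectrum. The key point is that $\gamma$ is a quantity averaged over $\theta$, so I do not need localization for a fixed $\theta$ with $\theta$-independent constants; instead I can exploit the averaging to control the bad set of $\theta$'s where localization estimates degrade. The natural engine is the quantitative localization machinery of \cite{j99}: for supercritical $\lambda$ and weakly Liouville $\alpha$, any generalized eigenfunction decays exponentially away from a center of localization, with rate governed by the Lyapunov exponent $L(\lambda)=\ln\lambda>0$. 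The first step is therefore to recall this localization length and the associated estimates on the transfer matrices and on $|\varphi_s(n)|$, phrased so that the decay rate is uniform and the only $\theta$- and $s$-dependence sits in the location of the localization center and in an exceptional set of controllable measure.

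\emph{Second step.} I would fix an energy $E$ in the spectrum and an eigenfunction $\varphi$ with localization center $n_0=n_0(\theta,E)$. The double-resonance / block-resonance analysis of Aubry duality and the Gordon-type arguments used in \cite{j99} show that, outside a set of $\theta$ of small measure, one has $|\varphi(n)|\le C\,\E^{-\gamma_0|n-n_0|}$ with $\gamma_0$ close to $\ln\lambda$. The crucial quantity to control is the distribution of $n_0$: the product $|\varphi(0)|\,|\varphi(k)|$ is genuinely small precisely when the center $n_0$ is far from both $0$ and $k$, and it is only large when $n_0$ is near $0$ or near $k$. Thus I would split the sum over $s$ (equivalently the spectral average) according to whether the localization center of $\varphi_s$ lands near $0$, near $k$, or in between, and estimate each contribution: the "in between" region gives the clean exponential bound $\E^{-\gamma_0 k}$, while the regions near $0$ and near $k$ must be handled by the $\theta$-averaging.

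\emph{Third step.} To carry out the $\theta$-average I would use the reflection/covariance structure of the almost Mathieu family: shifting the phase $\theta\mapsto\theta+n_0\alpha$ translates the operator by $n_0$, so integrating over $\theta\in[0,1)$ effectively averages over all possible positions of the localization center. Combining this translation invariance of the $\theta$-integral with the uniform exponential decay from Step 2 lets me bound $\EE\big(\sum_s|\varphi_s(0)|\,|\varphi_s(k)|\big)$ by $C\,\E^{-\gamma_0 k}$ times a factor that absorbs the measure of the exceptional $\theta$-sets and the multiplicity/summability of eigenfunctions with a given center. Taking $-\frac{1}{|k|}\ln$ of this and letting $k\to\infty$ then yields $\gamma\ge\gamma_0>0$, provided $\beta(\alpha)$ is small enough that the exceptional sets and the Liouville corrections to the decay rate are strictly subexponential in $k$.

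\emph{Main obstacle.} I expect the hard part to be the uniform control of the localization centers and of the exceptional sets simultaneously over all eigenfunctions $\varphi_s$ and over $k\to\infty$. For a single $\theta$ and a single eigenfunction the decay estimate is standard, but here I need a bound that survives both summation over $s$ and integration over $\theta$, which forces me to quantify how the bad-$\theta$ measure and the deviation of $\gamma_0$ from $\ln\lambda$ depend on the distance scale $k$ and on $\beta(\alpha)$. The Diophantine/weakly-Liouville hypothesis enters exactly to guarantee that the small-denominator obstructions to localization contribute only a subexponential loss; making this trade-off precise, so that the exponential rate $\gamma_0$ is not eaten away by the accumulated exceptional contributions as $k$ grows, is where the genuine work lies.
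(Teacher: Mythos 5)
Your proposal follows, in skeleton, the same route as the paper: group eigenfunctions by localization center, use the covariance $\theta \mapsto \theta + n\alpha$ to reduce to a fixed center, and let the $\theta$-average kill the resonant phases scale-by-scale. But it has genuine gaps at exactly the two points where the paper does its real work. The first is the sum over $s$. Your Step~2 bound $|\varphi(n)| \leq C\E^{-\gamma_0|n-n_0|}$, with an absolute constant $C$, cannot be summed over eigenfunctions: a priori there is no bound on the number of eigenfunctions whose localization center lands near $0$ or near $k$, so the ``factor that absorbs \dots the multiplicity/summability of eigenfunctions with a given center'' is precisely the quantity you cannot take for granted --- it could be infinite. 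The paper's mechanism is to define $n_{\theta;s}$ as the argmax of $|\varphi_{\theta;s}|$, to state the decay \emph{relative to the maximal value}, $|\varphi_{\theta;s}(\ell)| \leq C_1\,|\varphi_{\theta;s}(n_{\theta;s})|\,\E^{-\gamma|\ell-n_{\theta;s}|}$ (this is what the Avila--Jitomirskaya theorem yields after normalizing $u=\varphi/|\varphi(n_{\theta;s})|$, which satisfies the hypothesis $|u(n)|\leq 1+|n|$ precisely because the center is the argmax), then to group by exact center $n$ and use Parseval, $\sum_{n_{\theta;s}=n}|\varphi_{\theta;s}(n)|^2 \leq 1$, together with Cauchy--Schwarz (Theorem~\ref{thm:A} and Corollary~\ref{cor:A}). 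This orthogonality step is the key lemma of the whole argument; without it, or some substitute, your Step~3 does not close.

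The second gap is the localization input. You invoke \cite{j99}, but \cite{j99} gives localization for almost every $\theta$ with constants that are neither uniform in $\theta$ nor in the eigenfunction, and it does not come packaged with the scale-by-scale measure estimates your argument needs. What you actually require --- exponential decay valid for \emph{all} $\theta$ in windows between explicitly defined resonances $\|2\theta - k\alpha\| \leq \E^{-\eta|k|}$, so that the set of $\theta$ which are bad at distance scale $k$ has measure at most $\E^{-\eta k/C_0}$ --- is exactly Theorem~5.1 of \cite{aj10}, which the paper uses as a black box. Extracting an equivalent statement from the methods of \cite{j99} would be a substantial reworking of that proof, not a citation; also note that for a fixed $\theta$ resonances can recur at arbitrarily large scales, so a statement of the form ``outside a small set of $\theta$, decay holds at all scales with a uniform constant'' is not the right formulation --- the bad set must be allowed to depend on the scale $k$. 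Finally, a minor overstatement: the rate $\gamma_0$ close to $\ln\lambda$ is stronger than what is available by this route; the bounds of \cite{aj10} are non-quantitative, which is why the paper's $\gamma$ is as well.
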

This immediately implies
\begin{corollary}\label{cor:main}
Under the conditions of Theorem \ref{thm:main} there exists $\gamma > 0$
 and $C > 0$ such that
 \be
   \int_{0}^{1} \sup_{t\in\R} |\spr{\delta_{\ell}}{\E^{-\I t H_{\lambda,\alpha,\theta}} \delta_{k}}| d\theta
    \leq C \E^{-\gamma |k-\ell|}
 \ee
 for all $k,\ell \in\Z$.
\end{corollary}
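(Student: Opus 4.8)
\emph{Reduction of the corollary to the theorem.} The plan is to prove the theorem first and then read off the corollary. Since $\{\varphi_s\}_s$ diagonalizes $H_{\lam,\alpha,\theta}$ with eigenvalues $E_s$, for any bounded $f$ one has $|\spr{\delta_\ell}{f(H_{\lam,\alpha,\theta})\delta_k}| \le \|f\|_\infty \sum_s |\varphi_s(\ell)|\,|\varphi_s(k)|$, so in particular the $\sup_t$ in the corollary is dominated by $\sum_s|\varphi_s(\ell)|\,|\varphi_s(k)|$. The operator is covariant under the shift, $H_{\lam,\alpha,\theta+\ell\alpha}=S_\ell H_{\lam,\alpha,\theta}S_\ell^{-1}$, whence $|\varphi_s^{\theta+\ell\alpha}(n)| = |\varphi_s^\theta(n-\ell)|$ after relabeling; combined with the invariance of $d\theta$ on $[0,1]$ under $\theta\mapsto\theta+\ell\alpha$ this turns $\int_0^1\sum_s|\varphi_s(\ell)||\varphi_s(k)|\,d\theta$ into $\int_0^1\sum_s|\varphi_s(0)||\varphi_s(k-\ell)|\,d\theta$. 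The latter equals $\EE\big(\sum_s|\varphi_s(0)|\,|\varphi_s(k-\ell)|\big)$, which the theorem bounds by $C\,\E^{-\gam|k-\ell|}$. Thus everything reduces to controlling $\EE\big(\sum_s|\varphi_s(0)|\,|\varphi_s(k)|\big)$.

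\emph{The pointwise mechanism.} For $\lam>1$ and $\beta(\alpha)$ small, Anderson localization holds for a.e.\ $\theta$ \cite{j99,10M} and the Lyapunov exponent on the spectrum equals $\ln\lam$. To each eigenpair I attach a localization center $n_s=n_s(\theta)$ (a site of near-maximal $|\varphi_s|$), and the backbone estimate is the quantitative decay $|\varphi_s(n)|\le C(\theta)\,\E^{-\gam_0|n-n_s|}$ with $\gam_0$ close to $\ln\lam$, obtained by iterating the block-resolvent (Poisson) expansion along boxes on which the finite-volume Green's functions decay at rate $\sim\ln\lam$. Granting this, the product telescopes by the triangle inequality,
\[
 |\varphi_s(0)|\,|\varphi_s(k)| \le C(\theta)^2\,\E^{-\gam_0(|n_s|+|n_s-k|)} \le C(\theta)^2\,\E^{-\gam_0|k|},
\]
and, because orthonormality together with the decay forces only $O(1)$ eigenfunctions to have localization center in any fixed unit window, the sum over $s$ costs at most a polynomial factor in $|k|$. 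Hence $\sum_s|\varphi_s(0)|\,|\varphi_s(k)| \le C(\theta)\,\E^{-\gam|k|}$ for every $\gam<\gam_0$, for a.e.\ fixed $\theta$.

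\emph{The main obstacle: uniformity in $\theta$.} The difficulty is that $C(\theta)$ genuinely blows up on the resonant set of phases for which the orbit $\{n\alpha+\theta\}$ returns too close to a center of symmetry, so the abstract a.e.\ statement above is not enough; one must show that $\EE\big(C(\theta)\,\E^{-\gam|k|}\big)$ still decays exponentially. This is exactly where the arithmetic hypothesis enters. I would estimate, via the large-deviation bounds for the transfer-matrix cocycle (top exponent $\ln\lam$) coming from subharmonicity of $\tfrac1n\ln\|A_n\|$, the measure of the set of $\theta$ on which a box of length $\ell$ fails to be ``regular'' with decay rate close to $\ln\lam$; the condition $\beta(\alpha)<\beta$ bounds the density of resonant returns of the orbit and forces this bad measure to be only sub-exponentially large in $\ell$, while each good box yields an exponential gain. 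Choosing the box scale comparable to $|k|$ and balancing the sub-exponential loss of measure against the exponential gain then produces a net exponential bound after integration in $\theta$; tracking the constants fixes the admissible $\beta>0$ and a positive value of $\gam$. The genuinely hard step is this quantitative trade-off --- turning pointwise localization into an expectation bound with an honest exponential rate requires controlling resonances uniformly, not merely establishing that they are rare for a.e.\ $\theta$.
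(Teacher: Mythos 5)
Your opening reduction is correct: covariance ($H_{\lambda,\alpha,\theta+\ell\alpha}$ is unitarily equivalent to a shift of $H_{\lambda,\alpha,\theta}$) plus rotation-invariance of Lebesgue measure turns the integrand into $\sum_s|\varphi_s(0)||\varphi_s(k-\ell)|$, and the trivial bound $\sum_s|\varphi_s(0)||\varphi_s(k)|\le 1$ (Cauchy--Schwarz and completeness) absorbs small $|k-\ell|$, so the corollary does follow from positivity of the liminf in Theorem~\ref{thm:main}; this fills in a step the paper leaves implicit. The problems are in your proof of the theorem itself. A first, reparable, gap: the bound $|\varphi_s(n)|\le C(\theta)\E^{-\gamma_0|n-n_s|}$ with $C(\theta)$ \emph{uniform in $s$} is uniform localization, which is known to fail for the almost Mathieu operator (by \cite{j96}, uniform localization on a positive-measure set of phases would propagate to all phases, contradicting the existence of the dense $G_\delta$ of phases without localization \cite{rjls95,rjls96}). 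What \cite{j99,jl98} actually give is the SULE-type bound $|\varphi_s(n)|\le C_\eps(\theta)\E^{\eps|n_s|}\E^{-\gamma|n-n_s|}$; your telescoping survives this with $\gamma$ slightly reduced, so the a.e.-$\theta$ pointwise statement is fine in substance, though not as you derived it.

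The fatal gap is in your last paragraph, which you correctly identify as the heart of the matter. On the bad set of phases the only available bound on $\sum_s|\varphi_s(0)||\varphi_s(k)|$ is $O(1)$, so the bad set's contribution to the expectation is simply its measure: no ``balancing'' of a sub-exponential measure loss against exponential gains on the good set can repair this. Exponential decay of the expectation \emph{requires} the exceptional set at scale $|k|$ to have exponentially small measure, together with decay constants on its complement that are uniform in $\theta$; a large-deviation scheme as you sketch it (which typically yields sub-exponential measure bounds, and whose exceptional sets live in $(\theta,E)$ and still must be converted into statements about all eigenfunctions with a given localization center) does not deliver either ingredient without essentially redoing \cite{aj10}. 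That is precisely what the paper imports from \cite{aj10}: for \emph{every} $\theta$, any generalized eigenfunction normalized at its maximum satisfies $|u(n)|\le C_1\E^{-\gamma|n|}$ for $|n|$ between consecutive $\eta$-resonances, with $C_1,\gamma$ uniform in $\theta$; the only remaining $\theta$-dependence is whether $|k|$ falls in a resonant window, and since for fixed $j$ the set of $\theta$ with $\|2\theta-j\alpha\|\le\E^{-\eta|j|}$ has measure of order $\E^{-\eta|j|}$, the bad set at scale $|k|$ has measure at most of order $\E^{-\eta|k|/C_0}$ --- exponentially small, which is what makes the integration close. (Secondarily, the paper avoids your eigenfunction-counting step by using orthogonality twice: Cauchy--Schwarz over localization centers in Theorem~\ref{thm:A}, and $\sum_s|\varphi_{\theta;s}(0)|^2=1$ for eigenfunctions centered at the origin.) Without an input of the strength of the almost-localization theorem of \cite{aj10}, your outline does not close.
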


The conclusion of this corollary can be best called
{\em exponential dynamical localization in expectation}.
It is clear that this result implies both the results of 
\cite{gj01} and \cite{jl98}. 
The proof of Theorem~\ref{thm:main} is based on the results
of \cite{aj10}. As the bounds on eigenfunction decay
obtained in \cite{aj10} are nonquantitative also our
bounds on $\gamma$ are. 

It is an interesting and natural question if the limit in the
right-hand side of
(\ref{gamma}) exists in general and if for one dimensional operators $\gamma $
can be equal to the minimal Lyapunov exponent as that is how the eigenfunctions often decay
(for the Diophantine almost Mathieu case the almost Lyapunov decay of
the eigenfunctions was shown in \cite{j99}). We plan to investigate
this question in a future work. It is also interesting to establish
positive decay rate for other well-studied models with localization,
for example, for other quasiperiodic operators in the regime of
positive Lyapunov exponents.

\bigskip

The proof of Theorem~\ref{thm:main} splits into two parts.
In Section~\ref{sec:redef}, we reduce the question to a question
of decay estimates of eigenfunctions with a given localization
center. As we hope that the results will be useful in other
contexts, we formulate this section for general families of
operators acting on $\ell^2(\Z^d)$.
In Section~\ref{sec:al}, we deduce the appropriate
decay estimate from the results of \cite{aj10}.

\section{Reduction to a question about eigenfunctions}
\label{sec:redef}

As the results in the following are valid for general families
of operators, we state them in this generality. Let $\{H_x\}_{x\in X}$
be a family of self-adjoint operators on $\ell^2(\Z^d),$ and $(X,\mu)$ a probability
space. Assume that for $\mu$ almost every $x\in X$ the spectrum
of $H_x$ is pure point. Denote by $\varphi_{x;s}$ an orthonormal
basis of $\ell^2(\Z^d)$ consisting of eigenfunctions of $H_x$.
For each $\varphi_{x; s}$, we let $n_{x;s} \in\Z^d$ be  such that
\be
 |\varphi_{x;s}(n_{x;s})| = \| \varphi_{x;s}\|_{\ell^{\infty}(\Z^d)}.
\ee
It is shown for $d=1$ in Section~2 of \cite{gjls97}. 
that $\varphi_{x;s}$ and $n_{x;s}$ can be chosen
to be measurable functions of $x$. The general case
can be found in \cite{gk99}.

Fixing $\lambda > 1$ and $\alpha$ with $\beta(\alpha)$ sufficiently small,
the results of \cite{j99} imply that the family of operators
$\{H_{\lambda,\alpha,\theta}\}_{\theta\in [0,1]}$ satisfy the
above properties when $[0,1]$ is equipped with the normalized
Lebesgue measure.

We state the next theorem in a form more general than is
necessary for application to the almost Mathieu operator. It
is our hope that it will be useful in other contexts. The
main difference to a more naive estimate is that one
can exploit the orthogonality of the $\varphi_{x;s}$ when
estimating $\sum_{n_{x;s} = n} |\varphi_{x;s}(\ell)|^2$.

\begin{theorem}\label{thm:A}
 We have for almost every $x$ that
 \be\begin{split}
 |\spr{\delta_k}{\E^{-\I t H_x} \delta_{\ell}}| 
&\leq \sum_{s} |\ol{\varphi_{x;s}(k)} \varphi_{x;s}(\ell)| \\
\nn&\leq \sum_{n}   \left( \sum_{n_{x;s} = n} |\ol{\varphi_{x;s}(k)}|^2 
    \sum_{n_{x;s} = n} |\varphi_{x;s}(\ell)|^2\right)^{\frac{1}{2}}
 \end{split}\ee
 and 
 \be\begin{split}
  \int |\spr{\delta_k}{\E^{-\I t H_x} \delta_{\ell}}| d\mu(x)&
   \leq \int \sum_{s} |\ol{\varphi_{x;s}(k)} \varphi_{x;s}(\ell)| d\mu(x) \\
\nn  & \leq \sum_{n} \left(\int \sum_{n_{x;s} = n} |\varphi_{x;s}(k)|^2 d\mu(x)
    \int \sum_{n_{x;s} = n} |\varphi_{x;s}(\ell)|^2 d\mu(x)\right)^{\frac{1}{2}}.
 \end{split} \ee
\end{theorem}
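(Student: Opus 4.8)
The plan is to expand $\E^{-\I t H_x}\delta_\ell$ in the eigenbasis $\{\varphi_{x;s}\}$, take absolute values to remove the time-dependent phases, and then organize the resulting sum over eigenfunctions according to their localization centers $n_{x;s}$. Cauchy--Schwarz then gets applied twice: once \emph{within} each localization class (to separate the $k$- and $\ell$-dependence), and, for the expectation statement, once more \emph{across} the probability space $(X,\mu)$.

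First I would note that for a.e.\ $x$ the operator $H_x$ has pure point spectrum, so the spectral theorem gives, for the eigenvalue $E_{x;s}$ attached to $\varphi_{x;s}$,
\[
 \spr{\delta_k}{\E^{-\I t H_x}\delta_\ell} = \sum_s \E^{-\I t E_{x;s}}\,\ol{\varphi_{x;s}(k)}\,\varphi_{x;s}(\ell)
\]
(up to an overall conjugation depending on the inner-product convention, which is immaterial once we pass to moduli). Since $|\E^{-\I t E_{x;s}}|=1$, the triangle inequality yields the first inequality in both displays. For the second (pointwise) inequality I would group the sum over $s$ by the common value $n=n_{x;s}$ of the localization center and apply Cauchy--Schwarz to each inner sum,
\[
 \sum_{n_{x;s}=n}|\ol{\varphi_{x;s}(k)}\,\varphi_{x;s}(\ell)| \leq \Bigl(\sum_{n_{x;s}=n}|\varphi_{x;s}(k)|^2\Bigr)^{1/2}\Bigl(\sum_{n_{x;s}=n}|\varphi_{x;s}(\ell)|^2\Bigr)^{1/2},
\]
and then sum over $n$. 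Here Parseval guarantees $\sum_s|\varphi_{x;s}(k)|^2=\|\delta_k\|^2=1$, so every class sum is finite (indeed at most $1$); this is the point at which orthogonality of the eigenbasis is exploited.

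For the expectation statement I would integrate the pointwise bound in $x$ and use Tonelli to interchange $\int$ with $\sum_n$, all summands being nonnegative. Setting $A_n(x)=(\sum_{n_{x;s}=n}|\varphi_{x;s}(k)|^2)^{1/2}$ and $B_n(x)=(\sum_{n_{x;s}=n}|\varphi_{x;s}(\ell)|^2)^{1/2}$, Cauchy--Schwarz in $L^2(X,\mu)$ gives, for each fixed $n$,
\[
 \int A_n(x)B_n(x)\,d\mu(x) \leq \Bigl(\int A_n(x)^2\,d\mu(x)\Bigr)^{1/2}\Bigl(\int B_n(x)^2\,d\mu(x)\Bigr)^{1/2},
\]
and summing over $n$ produces exactly the right-hand side claimed.

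The conceptual content is modest, and I expect the only genuinely non-obvious idea to be the organizing step itself: splitting the eigenfunctions into classes sharing a common localization center before applying Cauchy--Schwarz is what decouples the $k$- and $\ell$-contributions and lets one profit from orthogonality, rather than bounding each $|\varphi_{x;s}(k)\varphi_{x;s}(\ell)|$ term by term. The steps requiring care are technical rather than deep: the measurability of $x\mapsto\varphi_{x;s}$ and $x\mapsto n_{x;s}$ (hence of $A_n,B_n$), which is guaranteed by \cite{gjls97,gk99}, and the justification for interchanging the possibly infinite sums with the integral, which I would handle throughout by Tonelli since every summand is nonnegative.
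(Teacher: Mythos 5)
Your proposal is correct and follows essentially the same route as the paper: expand in the eigenbasis, use the triangle inequality and regroup by localization center, apply Cauchy--Schwarz within each class, and then (for the expectation bound) integrate and apply Cauchy--Schwarz once more in $L^2(X,\mu)$ for each fixed $n$. The extra remarks on Parseval, Tonelli, and measurability are sound additional justification but do not change the argument.
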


\begin{proof}
 Denote by $E_{x;s}$ the eigenvalues of $H_x$ such that
 $H_x \varphi_{x;s} = E_{x;s} \varphi_{x;s}$. We have that
 \[
  \spr{\delta_k}{\E^{-\I t H_x} \delta_{\ell}} = 
   \sum_{s} \ol{\varphi_{x;s}(k)} \varphi_{x;s}(\ell) \E^{-i t E_{x;s}}.
 \]
 By the triangle inequality and reordering the sum,
 we obtain
 \[
  |\spr{\delta_k}{\E^{-\I t H_x} \delta_{\ell}}|
    \leq \sum_{n\in\Z} \sum_{n_{x;s} = n} |\ol{\varphi_{x;s}(k)} \varphi_{x;s}(\ell)|.
 \]
 The first equation follows by applying the Cauchy-Schwarz
 inequality to the inner sum. The second equation
 follows from the first by integrating and then applying the
 Cauchy-Schwarz to the terms of the form
 \[
  \int \left( \sum_{n_{x;s} = n} |\ol{\varphi_{x;s}(k)}|^2 \right)^{\frac{1}{2}}
   \left( \sum_{n_{x;s} = n} |\varphi_{x;s}(\ell)|^2\right)^{\frac{1}{2}} d\mu(x).
 \]
\end{proof}

\begin{corollary}\label{cor:A}
For $C, \gamma > 0$ assume for $n, \ell \in \Z^d,$ that 
 \be
  \int \sum_{n_{x;s} = n} |\varphi_{x;s}(\ell)|^2 d\mu(x) \leq C\E^{-2 \gamma |n-\ell|}.
 \ee
 Then
 \be
  \int |\spr{\delta_k}{\E^{-\I t H_x} \delta_{\ell}}| d\mu(x)
   \leq \begin{cases}C_1(\gamma,d,C)(1+|k-\ell|^{d-1})  \E^{-\gamma
     |k-\ell|}, & d>1,\\
C \left(\frac{1 + \gamma}{\gamma}  + |k-\ell|\right)
    \E^{-\gamma |k-\ell|}, & d=1.\end{cases}
 \ee
\end{corollary}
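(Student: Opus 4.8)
The plan is to feed the eigenfunction hypothesis directly into the second (integrated) estimate of Theorem~\ref{thm:A}, where the Cauchy--Schwarz step has already been carried out. The two factors under the square root are \emph{exactly} the quantities the hypothesis controls, so applying it once with sites $k$ and once with $\ell$ (both against center $n$) gives
\[
 \int \sum_{n_{x;s}=n}|\varphi_{x;s}(k)|^2 \,d\mu(x)\cdot \int \sum_{n_{x;s}=n}|\varphi_{x;s}(\ell)|^2 \,d\mu(x) \le C^2\, \E^{-2\gamma|n-k|}\,\E^{-2\gamma|n-\ell|}.
\]
Taking the square root and summing over $n$ reduces everything to the purely geometric lattice sum
\[
 \int |\spr{\delta_k}{\E^{-\I t H_x}\delta_\ell}|\,d\mu(x) \le C\sum_{n\in\Z^d}\E^{-\gamma(|n-k|+|n-\ell|)},
\]
so the whole corollary becomes an estimate on $S(k,\ell):=\sum_{n}\E^{-\gamma(|n-k|+|n-\ell|)}$.

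For $d=1$ this can be evaluated essentially exactly. By the triangle inequality $|n-k|+|n-\ell|\ge|k-\ell|$, with equality precisely for the integers $n$ lying between $\ell$ and $k$. Writing $m=|k-\ell|$, I would split $S$ into the $m+1$ points in $[\ell,k]$, each contributing $\E^{-\gamma m}$, and the two outer tails, where $|n-k|+|n-\ell|=m+2j$ with $j\ge1$ the distance to the nearer endpoint. The tails are two geometric series, so
\[
 S=(m+1)\E^{-\gamma m}+2\E^{-\gamma m}\sum_{j\ge1}\E^{-2\gamma j}=\E^{-\gamma m}\Big(m+1+\tfrac{2}{\E^{2\gamma}-1}\Big).
\]
The elementary bound $\E^{2\gamma}-1\ge 2\gamma$ gives $\tfrac{2}{\E^{2\gamma}-1}\le\tfrac1\gamma$, hence $S\le\big(\tfrac{1+\gamma}{\gamma}+|k-\ell|\big)\E^{-\gamma|k-\ell|}$, which, multiplied by $C$, is exactly the claimed one-dimensional bound (constant included).

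For $d>1$ I would run the same reduction, then factor out the leading exponential: setting $r(n):=|n-k|+|n-\ell|-|k-\ell|\ge0$ gives $S=\E^{-\gamma|k-\ell|}\sum_{n}\E^{-\gamma r(n)}$, so it remains to bound $\sum_n\E^{-\gamma r(n)}$ by a polynomial in $|k-\ell|$ with a constant depending only on $\gamma,d,C$. The natural route is a lattice-point count organized by the shells $|n-k|=j$: each sphere carries $O(1+j^{d-1})$ lattice points, and for points outside the segment one has $r(n)\ge 2(j-m)$, which makes the far shells contribute a convergent, $\gamma$-dependent geometric series. Carrying this out produces a bound of the form (polynomial in $|k-\ell|$)\,$\cdot\,\E^{-\gamma|k-\ell|}$, and identifying the precise power of $|k-\ell|$ is where the geometry enters.

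The $d=1$ case is completely elementary; the real work, and the step I expect to be the main obstacle, is the $d>1$ lattice-point count. The delicate point is to retain the \emph{full} exponential rate $\gamma$ while extracting only the stated polynomial factor $1+|k-\ell|^{d-1}$: a crude shell estimate that counts all $O(1+j^{d-1})$ points on each sphere $|n-k|=j$ and uses only the reverse triangle inequality overcounts, because it ignores that only lattice points clustered near the $k$--$\ell$ direction actually realize $|n-\ell|\approx|k-\ell|-|n-k|$. Thus the heart of the argument is a careful count of how the level sets $\{r(n)\le c\}$ (the tubular/ellipsoidal neighborhoods of the segment joining $k$ and $\ell$) grow, and it is precisely this geometric estimate that fixes both the exponent of $|k-\ell|$ and the dependence of $C_1$ on $\gamma$, $d$, and $C$.
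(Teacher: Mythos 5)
Your reduction is exactly the right one, and since the paper states this corollary with no proof at all (it is presented as an immediate consequence of Theorem~\ref{thm:A}), the comparison is against the argument the authors left implicit. Your $d=1$ computation is that argument, carried out completely and correctly: plugging the hypothesis into the second inequality of Theorem~\ref{thm:A} reduces everything to $S(k,\ell)=\sum_{n}\E^{-\gamma(|n-k|+|n-\ell|)}$, and your split into the $|k-\ell|+1$ interior points plus two geometric tails, together with $\E^{2\gamma}-1\ge 2\gamma$, reproduces the paper's bound $C\bigl(\tfrac{1+\gamma}{\gamma}+|k-\ell|\bigr)\E^{-\gamma|k-\ell|}$ with the exact constant. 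This is the only case the paper ever uses (the almost Mathieu operator acts on $\ell^2(\Z)$), so for the purposes of the paper's main results your proof is complete.

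The genuine gap is the case $d>1$, which you outline but never prove; and the difficulty you flag is not just real but fatal to the statement as printed. If you carry out the spheroid count you propose (Euclidean norm), the level set $\{n:\ r(n)\le c\}$ is a prolate spheroid with semi-major axis $\tfrac12(|k-\ell|+c)$ and semi-minor axes $\tfrac12\sqrt{2c|k-\ell|+c^2}$, hence it contains $O\bigl(|k-\ell|^{(d+1)/2}c^{(d-1)/2}\bigr)$ lattice points for $1\le c\le |k-\ell|$; summing $\E^{-\gamma c}$ over these shells yields
\[
 S(k,\ell)\ \le\ C(\gamma,d)\,(1+|k-\ell|)^{(d+1)/2}\,\E^{-\gamma|k-\ell|},
\]
and this exponent is sharp: the discrete tube of points within distance $\sqrt{|k-\ell|}/10$ of the middle half of the segment from $k$ to $\ell$ contains $\asymp |k-\ell|^{(d+1)/2}$ lattice points, each with $r(n)=O(1)$, so $S(k,\ell)\gtrsim |k-\ell|^{(d+1)/2}\E^{-\gamma|k-\ell|}$. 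Consequently the correct polynomial factor obtainable from Theorem~\ref{thm:A} and the hypothesis is $(1+|k-\ell|)^{(d+1)/2}$, not $1+|k-\ell|^{d-1}$: for $d\ge3$ this is at least as strong as the stated bound (so the corollary is proved there, though only by this spheroid count --- the crude shell estimate gives $(1+|k-\ell|)^{d}$, and no cruder argument suffices), but for $d=2$ one has $3/2>d-1=1$, so the stated bound is strictly stronger than anything the hypothesis plus Theorem~\ref{thm:A} can deliver, and no refinement of the lattice-point count can rescue it. So your hesitation was well founded; to complete the proposal you should execute the spheroid count above and either restrict the conclusion to $d=1$ (all the paper needs) or replace the printed $d>1$ polynomial by $1+|k-\ell|^{(d+1)/2}$.
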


\section{Almost localization}
\label{sec:al}

The goal of this section is to discuss consequences of the work
\cite{aj10} and to give the proof of Theorem~\ref{thm:main}.

\begin{definition}
 Let $\theta\in\R$, $k \in \Z$, and $\eta > 0$.
 Then $k$ is called $\eta$-resonant for $\theta$ if 
 \be
  \|2 \theta - k \alpha\| \leq \E^{-\eta |k|}.
 \ee
 Given $\theta$ and $\eta$, we denote by $k_j$ the set
 of $\eta$-resonances for $\theta$.
\end{definition}

Recall furthermore, that the Almost--Mathieu equation
is given by
\be
 h_{\lambda,\alpha,\theta} u(n) = u(n+1) + u(n-1) + 2 \lambda \cos(2\pi (n \alpha + \theta)) u(n).
\ee
We use the notation $h_{\lambda,\alpha,\theta}$ in contrast with that
of (\ref{am}) to emphasize that
this is meant as a difference equation.
Theorem~5.1. in \cite{aj10} states that

\begin{theorem}
For $\lambda, C_0 > 1$ there exists $\eta(\lambda)>0$ and
$\beta(\lambda, C_0)>0$ so that for $\alpha$ with $\beta(\alpha)<\beta$ there exist
 $C_1, \gamma> 0$, such that for $\theta\in\R$
 and any solution
 $u$ of $h_{\lambda,\alpha,\theta} u = E u$ with $u(0) = 1$ 
 and $|u(n)| \leq 1+ |n|$, we have
 \be
  |u(n)| \leq C_1 \E^{-\gamma |n|}
 \ee
 for $C_0 (1 + |k_j|) \leq |n| \leq \frac{1}{C_0} |k_{j+1}|$
 where $k_j$ denote the $\eta$-resonances of $\theta$.
\end{theorem}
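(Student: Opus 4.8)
The plan is to derive the pointwise decay of $u$ from exponential decay of the finite-volume Green's functions, exploiting that in the supercritical regime the Lyapunov exponent is positive and in fact equals $\log\lambda$ on the spectrum. For an interval $I = [a_1, a_2] \subset \Z$ with $n$ in its interior, the restriction $H_I = h_{\lambda,\alpha,\theta}|_I$ with Dirichlet boundary conditions gives the Poisson-type identity
\[
 u(n) = -G_I(n, a_1)\, u(a_1 - 1) - G_I(n, a_2)\, u(a_2 + 1),
\]
where $G_I = (H_I - E)^{-1}$. Since the hypothesis $|u(m)| \leq 1 + |m|$ only allows polynomial growth at the boundary, it suffices to show that $n$ can be surrounded by an interval $I$ of length comparable to $|n|$ on which $G_I(n, \partial I)$ decays exponentially at a rate beating the polynomial boundary growth. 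I would call such an $I$ a regular (or ``good'') block for $E$.

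The analytic core is a large deviation estimate for the transfer matrices $A_m(\theta; E)$. First I would record that the entries of $A_m$ are trigonometric polynomials in $\theta$, so $A_m$ extends holomorphically in $z = \E^{2\pi\I\theta}$ and $\frac1m \log\|A_m\|$ is subharmonic. Combining Herman's subharmonic lower bound $L(E) \geq \log\lambda$ with the Bourgain--Jitomirskaya/Aubry-duality identity $L(E) = \log\lambda$ on the spectrum, and controlling the Fourier coefficients of the subharmonic function through its boundary values on a thin strip, I would obtain that for each fixed $E$ the set of $\theta$ with $\frac1m\log\|A_m(\theta; E)\| < (\log\lambda - \eps) m$ has exponentially small measure. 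Translating this into the determinants $P_I(\theta) = \det(H_I - E)$, which control $G_I$ through Cramer's rule, yields exponential decay of $G_I(n, \partial I)$ whenever $E$ is not abnormally close to an eigenvalue of $H_I$, i.e. whenever $|P_I(\theta)| \geq \E^{(\log\lambda - \eps)|I|}$.

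The arithmetic heart, and the step I expect to be the main obstacle, is to guarantee that \emph{every} point $n$ away from the resonance set can actually be enclosed in a regular block --- a statement for all $\theta$, not merely almost every $\theta$. Here the special structure of the cosine potential is decisive: the obstruction to placing a regular block near $n$ is exactly a near-coincidence of the phases $\theta + j\alpha$ and $-(\theta + j'\alpha)$, i.e. a small value of $\|2\theta - k\alpha\|$, which is precisely the definition of an $\eta$-resonance $k_j$. I would make this quantitative by using that $\theta \mapsto P_I(\theta)$ is a trigonometric polynomial of controlled degree, so that largeness in measure (from the deviation estimate) forces largeness everywhere except near its few zeros, whose locations I would pin to the resonances; then if $|n|$ lies in the window $C_0(1 + |k_j|) \leq |n| \leq C_0^{-1}|k_{j+1}|$, a block of length $\sim |n|$ around $n$ avoids all resonances and is therefore regular, the constant $C_0$ buying the room needed to beat the polynomial growth of $u$ at the boundary. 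The weakly Liouville hypothesis $\beta(\alpha) < \beta$ enters precisely to make the resonances sparse enough that such windows cover $\Z$, and to keep the rate $\gamma$ positive in the competition between the deviation $\eps$, the resonance strength $\eta$, and $\beta(\alpha)$.

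Finally I would assemble the estimate: iterating the Poisson identity across a regular block gives $|u(n)| \leq C_1 \E^{-\gamma|n|}$ on each window, with $\gamma$ a fixed fraction of $\log\lambda$ determined by how deep inside its block $n$ sits, and $\eta(\lambda)$, $\beta(\lambda, C_0)$ chosen small enough that the bad sets never overwhelm the windows. The nonquantitative character of the final constants, noted in the paper, reflects the fact that the measure-theoretic large deviation input does not track them explicitly.
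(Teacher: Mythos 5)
This statement is not proved in the paper at all: it is quoted verbatim as Theorem~5.1 of the cited reference \cite{aj10} (Avila--Jitomirskaya, \emph{Almost localization and almost reducibility}), so there is no in-paper argument to compare against. Measured against the actual proof in that reference (which refines the scheme of \cite{j99}), your outline follows essentially the same route: block expansion of a generalized eigenfunction via Dirichlet Green's functions, Cramer's rule reducing Green's function decay to lower bounds on the determinants $P_I(\theta)$, a large deviation estimate for $\tfrac1m\log\|A_m\|$ resting on positivity of the Lyapunov exponent ($L(E)=\log\lambda$ on the spectrum), and --- the decisive point you correctly isolate --- the use of the fact that $P_I$ is a polynomial of controlled degree in $\cos 2\pi\theta$, whose symmetry $\theta\mapsto-\theta$ forces any obstruction to regularity to come from a near-coincidence $\|2\theta-k\alpha\|\approx 0$, i.e.\ an $\eta$-resonance; this is exactly how \cite{aj10} obtains a statement valid for \emph{all} $\theta$ in the non-resonant windows rather than almost every $\theta$. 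Be aware, though, that your text is a plan rather than a proof: the two steps you describe in one sentence each are the substance of the cited work, namely the large deviation theorem that survives weakly Liouville $\alpha$ (where $\beta(\alpha)>0$ competes directly with the deviation rate), and the Lagrange-interpolation argument at nodes drawn from the orbit $\{\theta+j\alpha\}$ that converts ``small measure of the bad set'' plus ``few components of the sublevel set'' into the resonance dichotomy; neither is routine, and the interplay of $\eps$, $\eta$, $C_0$, and $\beta$ there is exactly where the constants $\eta(\lambda)$ and $\beta(\lambda,C_0)$ of the statement are generated.
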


This theorem implies

\begin{theorem}
 We have that
 \be
  \int_0^{1} \left(\sum_{n = n_{\theta;s}} |\varphi_{\theta; s}(\ell)|^{2} \right) d\theta
   \leq (C_1)^2 \E^{-2 \gamma |n - \ell|} + \E^{-\frac{\eta}{C_0} |n-\ell|}.
 \ee
\end{theorem}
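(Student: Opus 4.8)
The plan is to exploit the covariance of the almost Mathieu operator to reduce to the quoted almost localization theorem, and then to split the $\theta$-integral according to whether $|n-\ell|$ lands in a window of guaranteed exponential decay or in a resonant window whose measure is exponentially small.

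First I would use covariance to normalize the localization center to $0$. Fix $n$ and consider an eigenfunction $\varphi_{\theta;s}$ with $n_{\theta;s}=n$, so $|\varphi_{\theta;s}(n)| = \|\varphi_{\theta;s}\|_{\ell^\infty}$. Setting $u(m) = \varphi_{\theta;s}(m+n)/\varphi_{\theta;s}(n)$ produces a solution of $h_{\lambda,\alpha,\theta'} u = E_{\theta;s} u$ with $\theta' := \theta + n\alpha$, normalized by $u(0)=1$ and satisfying $|u(m)| \leq 1 \leq 1+|m|$ since $n$ is the location of the maximum. Thus the hypotheses of the almost localization theorem hold for $u$ at the parameter $\theta'$, and whenever $|m|$ lies in a good window $C_0(1+|k_j|) \leq |m| \leq \frac{1}{C_0}|k_{j+1}|$ (with $k_j$ the $\eta$-resonances of $\theta'$) we obtain $|\varphi_{\theta;s}(m+n)| \leq C_1 \E^{-\gamma|m|}|\varphi_{\theta;s}(n)|$. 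Taking $m = \ell-n$, if $|\ell-n|$ lies in such a window then $|\varphi_{\theta;s}(\ell)|^2 \leq C_1^2 \E^{-2\gamma|n-\ell|}|\varphi_{\theta;s}(n)|^2$.

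Next I would bring in orthogonality, which is the point the authors emphasize before Theorem~\ref{thm:A}. Summing the last bound over all $s$ with $n_{\theta;s}=n$ and using that $\{\varphi_{\theta;s}\}_s$ is orthonormal, Parseval gives $\sum_s |\varphi_{\theta;s}(n)|^2 = \|\delta_n\|^2 = 1$, so for every $\theta$ such that $|n-\ell|$ is a good window of $\theta' = \theta + n\alpha$ one has
\[
\sum_{n_{\theta;s}=n} |\varphi_{\theta;s}(\ell)|^2 \leq C_1^2 \E^{-2\gamma|n-\ell|}.
\]
For the remaining (``bad'') $\theta$ I would use the trivial bound $\sum_{n_{\theta;s}=n}|\varphi_{\theta;s}(\ell)|^2 \leq \sum_s |\varphi_{\theta;s}(\ell)|^2 = 1$, again by Parseval. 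Integrating over $[0,1]$, the good contribution is at most $C_1^2 \E^{-2\gamma|n-\ell|}$, so everything reduces to estimating the measure of the bad set.

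Finally comes the measure estimate, which I expect to be the main obstacle, since it is where the resonance structure must be controlled. A value $p=|n-\ell|$ fails to lie in any good window of $\theta'$ only if there is an $\eta$-resonance $k$ of $\theta'$ with $\frac{1}{C_0}|k| < p < C_0(1+|k|)$, which forces $|k| > \frac{1}{C_0}p - 1$. Because $\theta \mapsto \theta + n\alpha$ preserves Lebesgue measure on $[0,1]$, the measure of the bad $\theta$ equals that of the corresponding set of $\theta'$, which is contained in $\bigcup_{|k| > \frac{1}{C_0}p-1} \{\theta' : \|2\theta' - k\alpha\| \leq \E^{-\eta|k|}\}$. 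Each of these sets has measure at most $2\E^{-\eta|k|}$ (the doubling map is measure preserving and the target interval has length $2\E^{-\eta|k|}$), so summing the geometric tail over $|k| > \frac{1}{C_0}p - 1$ bounds the bad measure by a constant multiple of $\E^{-\frac{\eta}{C_0}|n-\ell|}$. Combining this with the good contribution yields the stated inequality; the only delicate bookkeeping is absorbing the constant from the geometric series so as to land on exactly the exponent $\frac{\eta}{C_0}$, which is where some care with the resonance sparsity and the choice of $C_0$ is needed.
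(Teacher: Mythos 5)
Your proposal is correct and follows essentially the same route as the paper: reduce to localization center $0$ (equivalently, shift $\theta$ by $n\alpha$) via covariance, apply the Avila--Jitomirskaya almost localization theorem on the good windows, use Parseval in the form $\sum_s |\varphi_{\theta;s}(n)|^2 = 1$ to sum over eigenfunctions sharing the localization center, and bound the measure of the resonant (bad) set of phases by a geometric tail starting at $|k| \gtrsim \frac{1}{C_0}|n-\ell|$. If anything, you are more careful than the paper about the factor of $2$ in the resonance-set measure and the constant from the geometric series, which the paper's own proof silently absorbs.
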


\begin{proof}
 By replacing $\theta$ by $\theta - n \alpha$ and $\ell$ by $\ell - n$,
 we can assume that $n = 0$. Then for $n_{\theta; s} = 0$ the functions 
 \[
  u_s = \frac{1}{|\varphi_{\theta; s}(0)|}\varphi_{\theta; s}
 \]
 satisfy the assumptions of the previous theorem. So we obtain
 that if $\ell$ is not $\eta$-resonant for $\theta$ that
 \[
  |\varphi_{\theta; s}(\ell)| \leq C_1 |\varphi_{\theta; s}(0)| \E^{-\gamma |\ell|}.
 \]
 By orthogonality $\sum_{s} |\varphi_{\theta; s}(0)|^2 = 1$, thus we obtain for these
 $\theta$ that
 \[
  \sum_{n_{\theta; s} = 0} |\varphi_{\theta; s}(\ell)|^2 \leq (C_1)^2 \E^{-2\gamma |\ell|}.
 \]
 Hence, we obtain that
 \begin{align*}
  \int_0^1&\left(\sum_{n_{\theta;s} = 0} |\varphi_{\theta; s}(\ell)|^{2} \right)d\theta
   \leq (C_1)^2 \E^{- 2\gamma |\ell|} \\
   &+ \left|\left\{\theta: \quad 
    \exists k\text{ $\eta$-resonances of $\theta$ with }
     \frac{1}{C_0} |k| \leq |\ell| \leq C_0 (1 + |k|)\right\}\right|.
 \end{align*}
 For fixed $k$ the set of $\theta$ such that
 $k$ is $\eta$-resonant for $\theta$ has measure $\E^{-\eta |k|}$.
 As the smallest possible choice for $k$ in the previous
 equation is $\frac{1}{C_0} |\ell|$ the claim follows.
\end{proof}

\begin{proof}[Proof of Theorem~\ref{thm:main} and Corollary~\ref{cor:main}]
 By the previous theorem, the assumptions of Corollary~\ref{cor:A} hold.
 The claim follows.
\end{proof}

\end{document}